\documentclass[reqno]{amsart}
\usepackage{amssymb}
\usepackage{graphicx}

\usepackage[usenames, dvipsnames]{color}
\usepackage{verbatim}
\usepackage{mathrsfs}

\numberwithin{equation}{section}

\newtheorem{theorem}{Theorem}[section]

\newtheorem{lemma}[theorem]{Lemma}

\theoremstyle{definition}
\newtheorem{remark}[theorem]{Remark}

\theoremstyle{definition}

\theoremstyle{definition}

\makeatletter
\def\dashint{\operatorname%
{\,\,\text{\bf-}\kern-.98em\DOTSI\intop\ilimits@\!\!}}
\makeatother

\def\\det{\text{det}}

\def\.5{\frac{1}{2}}

\newcommand{\RN}[1]{%
  \textup{\uppercase\expandafter{\romannumeral#1}}%
}

\renewcommand{\epsilon}{\varepsilon}

\newcounter{marnote}

\begin{document}
\title[Asymptotic behaviors]{Optimal Asymptotic Behavior at Infinity of Ancient Solution to the Parabolic Monge-Ampère Equation with Slow Perturbation Term}
\author[K. Yan]{Kui Yan}
\address[K. Yan]{School of Mathematical Sciences, Beijing Normal University, Laboratory of Mathematics and Complex Systems, Ministry of Education, Beijing 100875, China.}
\email{202231130028@mail.bnu.edu.cn}

\author[J. Bao]{Jiguang Bao\textsuperscript{*}}
\address[J. Bao]{School of Mathematical Sciences, Beijing Normal University, Laboratory of Mathematics and Complex Systems, Ministry of Education, Beijing 100875, China.}
\email{jgbao@bnu.edu.cn}

\footnotetext[1]{corresponding author}

\date{\today} 

\begin{abstract}
In this paper, we obtain optimal asymptotic behavior of parabolically convex $C^{2,1}$ solution to the parabolic Monge-Ampère equation $-u_t\det D_x^2u=f$, where $f$ converges to $1$ at infinity with a slow rate. This result extends the elliptic estimate in \cite{lb5} to the parabolic setting.

\noindent{\textbf{Keywords: }parabolic Monge-Ampère equation, optimal asymptotic behavior, slow rate.}
\end{abstract}

\maketitle

\section{introduction and main results} \label{introduction and main results}

The Monge-Ampère equation plays a fundamental role in diverse areas of mathematics, including differential geometry (prescribed Gaussian curvature problems, affine geometry) and applied analysis (optimal transport theory).  This fully nonlinear partial differential equation has found significant applications across multiple domains of geometric analysis and nonlinear PDE theory. 

The classical result from Jörgens-Calabi-Pogorelov \cite{j54,c58,p72} asserts that convex entire $C^2$ solution of $\det D_x^2u=1$ in $\mathbb{R}^n$ must be a quadratic polynomial. Cheng-Yau \cite{cy86} provided a simpler and more analytical proof. Caffarelli \cite{c95} generalized the classical JCP theorem to the frame of viscosity solutions. Jost-Xin \cite{jx01} developed an alternative proof. 

Caffarelli-Li \cite{cl03} obtained asymptotic behaviors of solutions to  $\det D_x^2u=f$ in $\mathbb{R}^n$, when $\text{supp}(f-1)$ is bounded in $\mathbb{R}^n$. Bao-Li-Zhang \cite{blz15} generalized Caffarelli-Li's results to the case where $f\in C^{m}(m\ge3)$ outside a bounded set in $\mathbb{R}^n$ and satisfies that for $\beta>2$, $i\in\mathbb{N}$ and $i\le m$
\[
\limsup_{|x|\rightarrow\infty}|x|^{\beta+i}\left|D^i_x(f(x)-1)\right|<\infty.
\]
There are also many extensive studies on the asymptotic behaviors of solutions, see \cite{fmm99,h20,lb1,lb2,lb3,lb4}. 

For $\beta\in(0,2]$, Liu-Bao \cite{lb5} proved slow convergence behaviors. More precisely, there exist a $n\times n$ positive definite matrix $A$ with $\det A=1$, $b\in\mathbb{R}^n$, such that for $i\in\mathbb{N}$ and $i\le m+1$
\[
\limsup_{|x|\rightarrow\infty}|x|^{\beta-2+i}\left|D_x^i\left( u-p_\beta\right)(x)\right| <\infty,\quad \beta\neq1,2,
\]
\[
\limsup_{|x|\rightarrow\infty}|x|^{\beta-2+i}\left(\ln |x|\right)^{-1}\left|D_x^i\left( u-p_\beta\right)(x)\right| <\infty,\quad \beta=1,2,
\]
where
\[
p_\beta(x):=\frac{1}{2}x'Ax+
\left\{\begin{array}{ll}
b\cdot x, & \beta\in(1,2], \\
0, & \beta\in(0,1].
\end{array}\right. 
\]
The results in \cite{lb5} for $\beta\neq1$ are optimal, while the optimality for $\beta=1$ is unknowm.

Krylov \cite{k76} firstly introduced the parabolic Monge-Ampère equations
\begin{equation}\label{pma1}
-u_t\det D_x^2u=f,
\end{equation}
which are closely relevant to Aleksandrov-Bakelman-Pucci type maximal principle for parabolic equations \cite{t85ab}, controlled diffusion process \cite{k81a,k81b} and  deformation of a surface related to Gauss–Kronecker curvature \cite{t85}. 

For reader's convenience, we introduce some notations here. Let $\mathbb{R}^{n+1}_-:=\mathbb{R}^n\times(-\infty,0]$. A function $u(x,t)$ defined in $\mathbb{R}^{n+1}_-$ is called parabolically convex if it is convex in $x$ and nonincreasing in $t$. We use $u_t$ or $D_tu$ to denote $u$'s derivative with respect to $t$ variable. We denote $D_x^i D_t^j u$ as $u$'s mixed derivative, which is $i\text{-th}$ order with respect to $x$ variable and $j\text{-th}$ order with respect to $t$ variable. We say that $f$ is $C^{2 k, k}(k\in\mathbb{N}^+)$ if $D_x^i D_t^j f$ is continous for $i,j\in\mathbb{N}$ and $i+2 j \leq 2 k$. $\mathcal{R}:=\left(|x|^2-t\right)^{\frac{1}{2}}$ will be frequently used.

Guti\'errez-Huang \cite{gh98} got that, when there exist $m_{1},m_{2}>0$ such that
\begin{equation}\label{ut}
m_{1}\le -u_t\le m_{2}\quad \text{in} \quad \mathbb{R}^{n+1}_-
\end{equation}
and $f\equiv1$, the parabolically convex $C^{4,2}$ ancient solution to \eqref{pma1} must be of form $-\tau t+p(x)$, where $\tau>0$ and $p(x)$ is a convex quadratic polynomial. Xiong-Bao \cite{xb11} extended Guti\'errez-Huang's results to more general equations. 

Zhang-Bao-Wang \cite{zbw18} obtained asymptotic behaviors of $C^{2,1}$ ancient solution when the support of $f-1$ is bounded and $n\ge3$. To be concrete, denote
\[
t_0:=\inf\{t\le0:(x,t)\in\text{supp}\left(f-1\right)\},
\]
then $u\equiv -\tau t+p(x)$ in $\mathbb{R}^n\times\left(-\infty,t_0\right]$ and $u$ converges to $-\tau t+p(x)$ at rate $\mathcal{R}^{2-n}$ in $\mathbb{R}^n\times\left[t_0,0\right]$. For $n\ge1$, Yan-Bao \cite{yb25} derived a convergence result when $f\in C^{2m,m}(m\ge2)$ outside a bounded set in $\mathbb{R}^{n+1}_-$ and satisfies that for $i,j\in\mathbb{N}$ and $i+2j\le 2m$
\begin{equation}\label{f}
\limsup_{\mathcal{R}\rightarrow\infty}\mathcal{R}^{\beta+i+2j}\left|D^i_xD^j_t(f(x,t)-1)\right|<\infty.
\end{equation}
They proved that there exist a positive definite matrix $A$, $b\in\mathbb{R}^n$, $c\in\mathbb{R}^1$, $\tau>0$ with $\tau\det A=1$ such that for $p(x):=\frac{1}{2}x'Ax+b\cdot x+c$, $i,j\in\mathbb{N}$ and $i+2j\le 2m$
\[
\limsup_{\mathcal{R}\rightarrow\infty}\mathcal{R}^{\beta-2+i+2j}\left|D_x^iD_t^j\left( u(x,t)-p(x)+\tau t\right)\right| <\infty.
\]
When $\text{supp}(f-1)$ is bounded, a convergence result at rate $\mathcal{R}^{-s}$ for any $s>0$ in the region $\mathbb{R}^n\times\left(-\infty,t_0\right]$ was also established.

In this paper, we study asymptotic behavior of solutions when $f$ satisfies \eqref{f} for $m\ge3$ and $\beta\in(0,2]$. 

\begin{theorem}\label{mainthm1}
For $n\ge1$, let $u\in C^{2,1}(\mathbb{R}^{n+1}_{-})$ be a parabolically convex solution to \eqref{pma1} with \eqref{ut}, where $f\in C^0(\mathbb{R}^{n+1}_{-})$ satisfies \eqref{f} for $m\ge3$ and $\beta\in(0,2]$. Then $u\in C^{2m,m}$ outside a bounded set in $\mathbb{R}^{n+1}_{-}$. Additionally there exist a $n\times n$ symmetric positive definite matrix $A$, $b\in\mathbb{R}^n$ and $\tau>0$ with $\tau\det A=1$ such that for $i,j\in\mathbb{N}$ and $i+2j\le 2m$
\[
\limsup_{\mathcal{R}\rightarrow\infty}\mathcal{R}^{\beta-2+i+2j}\left|D_x^iD_t^j\left( u-p_\beta\right)\right| <\infty,\quad \beta\neq1,2,
\]
\[
\limsup_{\mathcal{R}\rightarrow\infty}\mathcal{R}^{\beta-2+i+2j}\left(\ln \mathcal{R}\right)^{-1}\left|D_x^iD_t^j\left( u-p_\beta\right)\right| <\infty,\quad \beta=1,2,
\]
where
\[
p_\beta(x,t):=-\tau t+\frac{1}{2}x'Ax+
\left\{\begin{array}{ll}
b\cdot x, & \beta\in(1,2], \\
0, & \beta\in(0,1].
\end{array}\right. 
\]
\end{theorem}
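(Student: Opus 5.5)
We follow the scheme of \cite{yb25} and \cite{lb5}, adapted to the slow decay rate $\beta\in(0,2]$. The plan has four steps: a rough quadratic approximation, a rescaling argument giving regularity and the first decay, a linearized equation treated by a parabolic Newtonian-potential estimate, and an iteration that also yields the higher derivatives.

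\textbf{Step 1: rough quadratic approximation.} Using \eqref{ut} and the fact that $f\to1$, we first show that $u$ grows quadratically in the parabolic sense. Concretely, $u(x,t)$ is comparable to $|x|^2-t$ at infinity, and the sections
\[
S_h=\{(x,t): u(x,t)<h\}
\]
are comparable to parabolic cylinders. This uses the Aleksandrov–Bakelman–Pucci/Tso type estimates of Guti\'errez–Huang, as in \cite{zbw18,yb25}.

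\textbf{Step 2: rescaling, regularity, first decay.} We rescale
\[
u_R(y,s)=R^{-2}u(Ry,R^2 s).
\]
The function $u_R$ solves $-\partial_s u_R\det D^2_y u_R=f(Ry,R^2s)$, whose right-hand side tends to $1$ uniformly with the rate $R^{-\beta}$.

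We compare $u_R$ on $S_1$ with the solution of the problem with right-hand side $1$. Combining Caffarelli-type $C^{2,\alpha}$ estimates for parabolic Monge–Ampère (Guti\'errez–Huang interior estimates) with the Liouville theorem of \cite{gh98} in blow-down form, we obtain that $D^2_xu$ and $u_t$ converge at infinity to some $A$ and $-\tau$ with $\tau\det A=1$. At this stage the rate is only a small power $\mathcal{R}^{-\epsilon}$.

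Once $u$ is uniformly parabolically elliptic in the rescaled domains, we differentiate the equation. Since $f\in C^{2m,m}$ with the decay \eqref{f}, standard Schauder theory on unit parabolic cylinders, applied to the rescalings, gives $u\in C^{2m,m}$ outside a bounded set. It also shows that any decay of $w:=u-(-\tau t+\tfrac12x'Ax)$ in $C^0$ upgrades to all weighted derivatives with the scaling $\mathcal{R}^{-i-2j}$. After a linear change of variables we may assume $A=I$ and $\tau=1$.

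\textbf{Step 3: linearized equation for $w$.} The difference $w$ satisfies a linear equation
\[
-w_t+a_{ij}(x,t)D_{ij}w=f-1+Q=:g,
\]
where $a_{ij}\to\delta_{ij}$ and $Q$ is quadratic in $D^2_xw$ and $w_t$. Initially $|g|\le C\mathcal{R}^{-\beta}+C\mathcal{R}^{-2\epsilon}$. We then represent $w$ through the heat kernel on $\mathbb{R}^{n+1}_-$, treating $(a_{ij}-\delta_{ij})D_{ij}w$ as a perturbation absorbed into $g$.

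The core is a potential estimate: for $|g|\le C\mathcal{R}^{-\gamma}$ with $\gamma\in(0,2]$, the heat equation has a particular solution $v$ such that
\begin{itemize}
\item $v=O(\mathcal{R}^{2-\gamma})$ when $\gamma<2$, with $\gamma\neq1$;
\item $v=O(\mathcal{R}\ln\mathcal{R})$ when $\gamma=1$;
\item $v=O(\ln\mathcal{R})$ when $\gamma=2$.
\end{itemize}
The difference $w-v$ is then a caloric function of sub-quadratic growth on the ancient domain. By the parabolic Liouville theorem, $w-v$ is a polynomial $b\cdot x+c+\text{(quadratic caloric part)}$, and the quadratic part vanishes because $D^2w\to0$ and $w_t\to0$.

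\textbf{Step 4: iteration and the case split.} We iterate Step 3, improving $2\epsilon$ to $2\cdot 2\epsilon$, and so on, until the nonlinear part $Q$ decays faster than $\mathcal{R}^{-\beta}$. This leaves $w=b\cdot x+c+O(\mathcal{R}^{2-\beta})$, with a logarithmic correction when $\beta=1,2$. The cases then separate:
\begin{itemize}
\item For $\beta\le1$, the linear term $b\cdot x$ and the constant are dominated by $\mathcal{R}^{2-\beta}$ (or by $\mathcal{R}\ln\mathcal{R}$ when $\beta=1$), so they can be dropped.
\item For $\beta\in(1,2)$, only $c$ is dominated, since $2-\beta>0$.
\item For $\beta=2$, the constant is absorbed into $\ln\mathcal{R}$.
\end{itemize}
This explains the form of $p_\beta$. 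The derivative estimates follow from the rescaled Schauder estimates of Step 2.

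\textbf{Main obstacle.} The hardest part is the potential estimate on the half space-time $\mathbb{R}^{n+1}_-$ with only $\mathcal{R}^{-\gamma}$ decay, $\gamma\le2$. In this range the heat potential does not converge absolutely. We must split the Duhamel integral over parabolic dyadic shells and subtract the first-order Taylor terms of the kernel, which is what produces the $b\cdot x$ ambiguity and the logarithms. We also need to control the variable-coefficient perturbation uniformly, since $a_{ij}-\delta_{ij}$ decays slowly.
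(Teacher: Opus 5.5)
Your proposal is correct, but at the decisive step it takes a different route from the paper.

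\textbf{Shared part.} Your Steps 1--2 are what the paper imports from Propositions 3.2 and 4.1 of \cite{yb25}. Note that the power rate $\mathcal{R}^{-\epsilon}$ comes from a Caffarelli--Li type iteration there, not from blow-down compactness alone.

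\textbf{Your route.} After Step 2 you use a single renormalized Duhamel estimate, valid for any continuous $g$ with $|g|\le C\mathcal{R}^{-\gamma}$, $\gamma\in(0,2]$. On $\{|(y,s)|_p\ge1\}$ you subtract from $\Gamma(x-y,t-s)$ its parabolic Taylor polynomial at $(x,t)=(0,0)$, of degree $k=0$ if $\gamma>1$ and $k=1$ if $\gamma\le1$. The same iteration-plus-Liouville argument then handles every $\beta\in(0,2]$, and $\mathcal{R}\ln\mathcal{R}$ for $\beta=1$ follows directly from $|g|\le C\mathcal{R}^{-1}$.

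\textbf{The paper's route.} The paper never renormalizes the kernel. Its Lemma \ref{lemma1} covers only $\beta\in(1,2]$ and needs decay of $D_x^iD_t^jf$ for $i+2j\le3$. It takes the absolutely convergent potentials of $D_{x_i}f$ and $D_tf$ and integrates them along paths, using path independence. This is why the paper needs the smoothing Lemma \ref{smooth}. It is also why, for $\beta=1$, the paper differentiates the equation, applies Lemma \ref{lemma1} with exponent $2$ to $D_x\eta$, bounds $D_t\eta$ separately, and integrates along $s\mapsto(sx,s^2t)$.

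\textbf{Trade-off.} The paper uses only convergent potentials, at the cost of extra regularity and a case split. Your route needs no derivatives of $g$ and no smoothing, since global $C^{2,1}$ of $u$ already makes $g$ continuous. In exchange, the kernel analysis you flag as the main obstacle has to be carried out in full.

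\textbf{Points to settle in the kernel analysis.} It does go through, with the following details.
\begin{itemize}
\item For $s<t$ and $|(y,s)|_p\ge2\mathcal{R}$, the Taylor remainder is $O(|(x,t)|_p^{k+1}|(y,s)|_p^{-n-k-1})$, so the tail integral converges exactly when $\gamma>1-k$.
\item Your sketch does not mention the strip $t<s<0$, where $\Gamma(x-y,t-s)\chi_{\{s<t\}}$ vanishes but the subtracted terms do not. Its far part contributes $O(\mathcal{R}^{2-\gamma})$, because there $|s|\le|t|$ forces $|y|\ge\sqrt3\,\mathcal{R}$.
\item Define $v$ by truncation and pass to the limit, since the subtracted integrals converge only as a difference.
\item In your iteration, choose $\epsilon$ so that no intermediate exponent $2^k\epsilon$ equals $1$. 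Otherwise a $\ln^2\mathcal{R}$ factor enters the quadratic term and spoils the sharp $\ln\mathcal{R}$ bound for $\beta=2$. The paper fixes $\epsilon_1\in(\frac12,1)$ for exactly this reason.
\end{itemize}
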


\begin{remark}\label{best}
The convergence rate established in Theorem \ref{mainthm1} is sharp for all $\beta\in\left(0,2\right]$. 

To demonstrate this, we construct an explicit smooth function in $\mathbb{R}^{n+1}_-$ as follows:
\[
u(x,t):=\frac{1}{2}|x|^2-t+\delta g_\beta(x,t),\quad (x,t)\in\mathbb{R}^{n+1}_-
\]
where $\delta>0$ is a parameter, and $g_\beta\in C^{\infty}(\mathbb{R}^{n+1}_-)$ satisfies that
\begin{equation}\label{optimal}
g_\beta(x,t)=
\left\{\begin{array}{ll}
\mathcal{R}^{2-\beta} & \text { for } \beta\ne1,2, \\
\sum\limits_{i=1}^nx_i\ln\mathcal{R} & \text { for } \beta=1, \\
\ln\mathcal{R} & \text { for } \beta=2,
\end{array}\right.\quad \text { for } \mathcal{R}>2, 
\end{equation}
with $g_\beta(x,t)=0$ for $0<\mathcal{R}<1$, and the derivative bounds
\[
|g_\beta'|+|g_\beta''|\le C\quad \text{for}\quad 1<\mathcal{R}<2,
\]
where $C>0$ is a constant. A direct computation shows that
\[
D_x^2u>\frac{1}{2}I\quad \text{and} \quad \frac{1}{2}\le-u_t\le\frac{3}{2}
\] 
for sufficiently small $\delta$. Consequently,  $u$ is parabolically convex in $\mathbb{R}^{n+1}_-$. For $\mathcal{R}$ sufficiently large, we derive the asymptotic expansion
\[
\begin{aligned}
-u_t\det D_x^2u
&=1+\delta\left(2-\beta\right)\left(n+\frac{1}{2}\right)\mathcal{R}^{-\beta}+\delta\left(\beta-2\right)\beta\mathcal{R}^{-\beta-2}|x|^2 + O\left(\mathcal{R}^{-2\beta}\right)\\
&=1+O\left(\mathcal{R}^{-\beta}\right),
\end{aligned}
\]
which confirms the ‌optimality‌ of the estimates in Theorem \ref{mainthm1} for $\beta\ne1,2$. For $\beta=2$, there is $u(x,t)=\frac{1}{2}|x|^2-t+O\left(\ln\mathcal{R}\right)$ and
\[
-u_t\det D_x^2u = 1+\delta\left(n+\frac{1}{2}\right)\mathcal{R}^{-2}-2\delta|x|^2\mathcal{R}^{-4}+O\left(\mathcal{R}^{-4}\right)=1+O\left(\mathcal{R}^{-2}\right),
\]
validating the sharpness of the estimate for $\beta=2$. For $\beta=1$, notice that
\[
u(x,t)=\frac{1}{2}|x|^2-t+O\left(\mathcal{R}\ln\mathcal{R}\right)
\]
and
\[
\begin{aligned}
-u_t\det D_x^2u&=1+\delta\mathcal{R}^{-2}\left(\sum\limits_{i=1}^{n}x_i\right)\left(n+2-2\mathcal{R}^{-2}|x|^2\right)+O\left(\mathcal{R}^{-2}\right)\\
&=1+O\left(\mathcal{R}^{-1}\right),
\end{aligned}
\]
which confirms the sharpness of the estimate for $\beta=1$.
\end{remark}

\begin{remark}
The asymptotic estimate for the elliptic Monge-Ampère equation when $\beta=1$ in \cite{lb5} is in fact sharp. To demonstrate this optimality, we construct an example as follows. Consider a smooth function $g\in C^{\infty}(\mathbb{R}^n)$ satisfying
\begin{equation}\label{beta=2}
g(x)=\left\{\begin{array}{ll}
\sum\limits_{i=1}^nx_i\ln|x| & \text { for } |x|>2, \\
0 & \text { for } 0<|x|<1.
\end{array}\right.
\end{equation}
Define the function
\[
u(x):=\frac{1}{2}|x|^2+\delta g\left(x\right),\quad x\in\mathbb{R}^n
\]
for some small $\delta>0$. A direct computation yields
\[
\det D^2_xu=1+\delta n|x|^{-2}\sum\limits_{i=1}^nx_i+O\left(|x|^{-2}\right)=1+O\left(|x|^{-1}\right),
\]
which establishes the optimality for $\beta=1$.
\end{remark}

This paper is structured as follows. In the remainder of this section, we introduce key notations. Section \ref{a slow convergence solution} presents the construction of a slow-converging solution to the heat equation. Next, we give the smoothing process of the solution to the parabolic Monge-Ampère equation in Section \ref{smoothing of solution}. Finally, Sections \ref{proof of Theorem 1} contains the proofs of Theorems \ref{mainthm1}.

For $\delta>0$ and $(x_0,t_0)\in\mathbb{R}^{n+1}_-$ we denote
\[
E_{\delta}(x_0,t_0):=\{(x,t)\in\mathbb{R}^{n+1}_-:\frac{1}{2}|x-x_0|^2-(t-t_0)< \delta,\quad t\le t_0 \},\quad E_{\delta}:=E_{\delta}(0,0).
\]
$\Delta_x:=\sum\limits_{i=1}^{n}D_{x_ix_i}$ is the Laplacian with respect to $x$ variable. For an invertible matrix $M$, its ‌inverse‌ is written as $\left(M^{ij}\right)$. The ‌parabolic norm‌ of $(x,t)\in \mathbb{R}^{n+1}_-$ is defined as $|(x,t)|_p:=\left(|x|^2-t\right)^{\frac{1}{2}}$. Assume the function $f$ satisfies the uniform bounds $\lambda\le f\le \Lambda$ in $\mathbb{R}^{n+1}_-$ for constants $\lambda,\Lambda>0$. For ease of narration, we assume that there exists $C_f>0$, such that for $i,j\in\mathbb{N}$ and $i+2j\le 2m$
\[
|D_x^iD_t^j\left(f-1\right)|\le C_f\mathcal{R}^{-\beta-(i+2j)} \quad \text{for} \quad \mathcal{R}\ge C_f.
\]

\section{a slow convergence solution to $\left(D_t-\Delta_x\right)u=f$} \label{a slow convergence solution}

In this section we construct an ancient solution with slow convergence to a type of inhomogenous heat equation.

\begin{lemma}\label{lemma1}
Let $f$ be a function defined in $\mathbb{R}^{n+1}_-$ satisfying that for $i,j\in \mathbb{N}$ with $i+2j\le3$, there is $D_x^iD_t^jf\in C^0\left(\mathbb{R}^{n+1}_-\right)$ and
\begin{equation}\label{festimate}
\left|D^i_xD_t^jf(x,t)\right|\le C_0 \mathcal{R}^{-\beta-i-2j}\quad \text{for}\quad \mathcal{R}\ge C_0,
\end{equation}
where $C_0>0$, $\beta\in(1,2]$ are constants. Then there exists $u\in C^{2,1}\left(\mathbb{R}^{n+1}_-\right)$ solving 
\begin{equation}\label{heatequation}
\left(D_t-\Delta_x\right)u=f\quad \text{in}\quad \mathbb{R}^{n+1}_-.
\end{equation}
Additionally there exists constant $C>0$ depending only on $n,C_0,\beta$ such that
\begin{equation}\label{slowestimate}
\left|u(x,t)\right|\le
\left\{\begin{array}{ll}
C\mathcal{R}^{2-\beta} &  \beta\in(1,2), \\
C\ln\mathcal{R} &  \beta=2,
\end{array}\right.\quad\text{for} \quad \mathcal{R}\ge C_0.
\end{equation}
\end{lemma}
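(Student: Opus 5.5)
The plan is to write $u$ as a Duhamel integral against the heat kernel, split into a contribution from the time-integrated source that converges absolutely and a correction that we subtract off. Let
\[
\Gamma(x,t)=(4\pi t)^{-n/2}e^{-|x|^2/(4t)}\quad (t>0)
\]
be the heat kernel. The natural candidate is
\[
u(x,t)=\int_{-\infty}^{t}\int_{\mathbb{R}^n}\Gamma(x-y,t-s)f(y,s)\,dy\,ds .
\]
For $\beta\in(1,2]$, however, $|f|\lesssim \mathcal{R}^{-\beta}$ is not integrable in time at fixed $x$. Indeed,
\[
\int_{\mathbb{R}^n}\Gamma(x-y,t-s)\,|f(y,s)|\,dy\lesssim (|x|^2+|s|)^{-\beta/2}\sim |s|^{-\beta/2}
\]
as $s\to-\infty$, and $|s|^{-\beta/2}$ is integrable at $-\infty$ only when $\beta>2$. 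So some renormalization is needed.

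First, split $f=f\chi+f(1-\chi)$, where $\chi$ is a smooth cutoff supported in $\{\mathcal{R}\ge 2C_0\}$. The compactly supported bounded part is harmless: its Duhamel integral decays like $\mathcal{R}^{-n}$ away from the support, with the usual local $C^{2,1}$ regularity, since $f$ is $C^{1}$-type in the parabolic sense there. Hence we may assume (\ref{festimate}) holds everywhere with $\mathcal{R}$ replaced by $\max(\mathcal{R},1)$.

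Second, renormalize by subtracting the value at a base point:
\[
u(x,t)=\int_{-\infty}^{t}\int_{\mathbb{R}^n}\big[\Gamma(x-y,t-s)-\Gamma(-y,-s)\mathbf{1}_{s<-1}\big]f(y,s)\,dy\,ds,
\]
and similarly for the parts where $t>-1$. The subtracted term depends on neither $x$ nor $t$, but the time ranges must be handled consistently. A cleaner alternative is to define
\[
u(x,t)=\int_{-\infty}^{t}\big[(P_{t-s}f(\cdot,s))(x)-(P_{-s}f(\cdot,s))(0)\mathbf{1}_{s\le -1}\big]\,ds,
\]
where $P$ denotes the heat semigroup. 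Then
\[
(D_t-\Delta_x)u=f
\]
formally, because the subtracted term is independent of $(x,t)$ for $t\ge -1$. For $t<-1$ the lower limit issue adds $-\int_t^{-1}(P_{-s}f)(0)\,ds$ type terms, and I would instead subtract $\int_{-\infty}^{t}(P_{-s}f)(0)\,ds$ only after making it convergent.

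Honestly, the simplest route is to estimate differences. The bracket is bounded by $|x|+|t|^{1/2}$ times derivative bounds on $P_\sigma f$: using $|D_xf|\lesssim \mathcal{R}^{-\beta-1}$ and $|D_tf|\lesssim\mathcal{R}^{-\beta-2}$, the $x$- and $t$-derivatives of $(P_{t-s}f(\cdot,s))(x)$ are $\lesssim |s|^{-(\beta+1)/2}$. This is integrable at $-\infty$ precisely because $\beta>1$, which explains the hypothesis. More precisely, I would verify convergence of $D_xu$ and $D_tu$ via
\[
D_xu=\int_{-\infty}^{t}P_{t-s}(D_xf)\,ds,
\]
which converges since $\beta+1>2$. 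That identity gives $u$ up to a constant, as the line integral of $(D_xu,u_t)$, where
\[
u_t=f+\Delta_x u=f+\int_{-\infty}^{t}P_{t-s}(\Delta_x f)\,ds .
\]
This is where the $i+2j\le3$ hypothesis enters: we need $D_x^2f$, $D_tf$, and one more derivative for the Hölder continuity required in $C^{2,1}$.

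For the estimate, we bound
\[
|D_xu(x,t)|\lesssim\int_{-\infty}^{t}(|x|^2+|s|+1)^{-(\beta+1)/2}\,ds\lesssim \mathcal{R}^{1-\beta},
\]
and similarly $|u_t|\lesssim\mathcal{R}^{-\beta}$. The convolution bound
\[
P_{t-s}\big(\langle\cdot,s\rangle^{-\gamma}\big)(x)\lesssim (|x|^2+|s|)^{-\gamma/2}
\]
follows by splitting $y$ near and far from $x$. Integrating along a parabolic path from $(0,0)$ to $(x,t)$, first in $x$ and then in $t$, yields
\[
|u|\lesssim \int_1^{\mathcal{R}} r^{1-\beta}\,dr ,
\]
which is $\mathcal{R}^{2-\beta}$ for $\beta<2$ and $\ln\mathcal{R}$ for $\beta=2$, as required in (\ref{slowestimate}). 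The main obstacle is the kernel convolution estimate for $P_{t-s}$ applied to the decaying weights, together with justifying differentiation under the integral near $s=t$, where the heat kernel is singular. The latter I would handle with the standard Hölder-potential argument, using the extra derivative of $f$.
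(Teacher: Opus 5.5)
Your final route is essentially the paper's proof: you build the gradient $(D_xu,u_t)$ from heat potentials of derivatives of $f$ (convergent since $\beta+1>2$), recover $u$ as a path-independent line integral from the origin, and integrate $|D_xu|\lesssim\mathcal{R}^{1-\beta}$, $|u_t|\lesssim\mathcal{R}^{-\beta}$ along a path, while the cutoff splitting and base-point renormalization you try first are unnecessary. The differences are minor: you set $u_t:=f+\int_{-\infty}^tP_{t-s}(\Delta_xf)\,ds$ so the equation holds by construction (the paper uses $v_0=\int\Gamma\,D_tf$ and then kills a leftover constant by decay), and you leave path independence implicit, which for your choice reduces to $D_{x_j}v_i=D_{x_i}v_j$ and $(D_t-\Delta_x)v_i=D_{x_i}f$, exactly the identities the paper verifies.
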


\begin{proof}
\textbf{Step 1: Definition of $\mathbf{u}$. }Define for $1\le i\le n$
\[
\begin{aligned}
v_i(x,t)&:=\int_{\mathbb{R}^{n+1}_t}\Gamma(x-y,t-s)D_{x_i}f(y,s)dyds\\&=\int_{\mathbb{R}^{n+1}_-}\Gamma(y,-s)D_{x_i}f(x-y,t+s)dyds
\end{aligned}
\]
and 
\[
\begin{aligned}
v_0(x,t)&:=\int_{\mathbb{R}^{n+1}_t}\Gamma(x-y,t-s)D_tf(y,s)dyds\\&=\int_{\mathbb{R}^{n+1}_-}\Gamma(y,-s)D_{t}f(x-y,t+s)dyds,
\end{aligned}
\]
where
\[
\Gamma(x,t):=e^{-\frac{|x|^2}{4t}}\left(4\pi t\right)^{-\frac{n}{2}}
\]
is the heat kernal. From Lemma 2.4 in \cite{yb25}, we obtain that for $\mathcal{R}\ge C_0$
\begin{equation}\label{v_0v_i}
\left|v_0\right|\le C\mathcal{R}^{-\beta},\quad \left|v_i\right|\le C\mathcal{R}^{1-\beta}\quad 1\le i\le n.
\end{equation}

We define $u(x,t)$ as a line integral of the second type
\begin{equation}\label{lineintegral}
u(x,t):=\int_{L_{(0_n,0)\rightarrow(x,t)}}v_1dx_1+\cdots+v_ndx_n+v_0dt,
\end{equation}
where $0_{n}$ is the zero vector in $\mathbb{R}^n$ and $L_{\left(0_n,0\right)\rightarrow(x,t)}$ represents a piecewise smooth directed curve from $(x,t)$ to $\left(0_n,0\right)$ in $\mathbb{R}^{n+1}_-$. We will demonstrate that this integral is path-independent, ensuring $u(x,t)$ is well-defined. 

First we establish that
\begin{equation}\label{ij}
D_{x_j}v_i(x,t)=\int_{\mathbb{R}^{n+1}_-}\Gamma(y,-s)D_{x_ix_j}f(x-y,t+s)dyds,
\end{equation}
\begin{equation}\label{ti}
D_tv_i(x,t)=\int_{\mathbb{R}^{n+1}_-}\Gamma(y,-s)D_{x_it}f(x-y,t+s)dyds,
\end{equation}
\begin{equation}\label{it}
D_{x_i}v_0(x,t)=\int_{\mathbb{R}^{n+1}_-}\Gamma(y,-s)D_{x_it}f(x-y,t+s)dyds,
\end{equation}
and all these integrals are continous in $\mathbb{R}^{n+1}_-$. For $h\in\mathbb{R}^1$, we consider
\begin{equation}\label{differencequotient}
\begin{aligned}
D_{x_j}^hv_i(x,t)&=\int_{\mathbb{R}^{n+1}_-}\Gamma(y,-s)D_{x_ix_j}f(x+h\theta e_j-y,t+s)dyds\\
&=\int_{\mathbb{R}^{n+1}_t}\Gamma(x+h\theta e_j-y,t-s)D_{x_ix_j}f(y,s)dyds,
\end{aligned}
\end{equation}
where $D_{x_j}^h$ is the difference quotient operator, $\theta\in(0,1)$, and $\left\{e_1,\cdots,e_n\right\}$ is the orthonormal basis in $\mathbb{R}^n$. We separate $\mathbb{R}^{n+1}_t$ into two regions $\text{I}_1,\text{I}_2$, where
\[
\text{I}_1:=\left\{(x,t)\in\mathbb{R}^{n+1}_t:\left|(y,s)-(x,t)\right|_p\ge C_0\quad \text{and}\quad\left|(y,s)\right|_p\ge C_0\right\},
\]
\[
\text{I}_2:=\left\{(x,t)\in\mathbb{R}^{n+1}_t:\left|(y,s)-(x,t)\right|_p\le C_0\quad \text{or}\quad\left|(y,s)\right|_p\le C_0\right\}.
\]
Lemma 2.1 in \cite{yb25} establishes that for some constant $C(n)>0$
\[
\left|\Gamma(x,t)\right|\le C(n)\left|(x,t)\right|_p^{-n}.
\]
For $(y,s)\in\text{I}_1$ with $|h|\le\frac{C_0}{2}$, the triangular inequality yields
\[
\left|(x+h\theta e_j-y,t-s)\right|_p\ge \left|(y,s)-(x,t)\right|_p-|h|\ge\frac{\left|(y,s)-(x,t)\right|_p}{2},
\]
implying
\[
\left|\Gamma(x+h\theta e_j-y,t-s)D_{x_ix_j}f(y,s)\right|\le C\left|(y,s)-(x,t)\right|_p^{-n}\left|(y,s)\right|_p^{-\beta-2}.
\]
For $(y,s)\in\text{I}_2$ with $|h|\le\frac{C_0}{2}$, the boundedness of $D_{x_ix_j}f$ in $\mathbb{R}^{n+1}_t$ gives that 
\[
\left|\Gamma(x+h\theta e_j-y,t-s)D_{x_ix_j}f(y,s)\right|\le C\left|\left(y,s\right)-\left(x+h\theta e_j,t\right)\right|_p^{-n}.
\]
Hence the integrand in \eqref{differencequotient} can be controlled by a integrable function in $\mathbb{R}^{n+1}_t$. Applying the dominated convergence theorem
\[
\begin{aligned}
D_{x_j}v_i(x,t)=\lim\limits_{h\rightarrow0}D_{x_j}^hv_i(x,t)&=\int_{\mathbb{R}^{n+1}_t}\Gamma(x-y,t-s)D_{x_ix_j}f(y,s)dyds\\
&=\int_{\mathbb{R}^{n+1}_-}\Gamma(y,-s)D_{x_ix_j}f(x-y,t+s)dyds.
\end{aligned}
\]
The local uniform convergence follows from \eqref{festimate}.  Combined with the continouity of $D_x^iD_t^jf$ in $\mathbb{R}^{n+1}_-$ for $i,j\in \mathbb{N}$ with $i+2j\le3$, the continouity of $D_{x_j}v_i$ is proved, thereby establishing \eqref{ij}. Similar arguments verify \eqref{ti} and \eqref{it}.

From \eqref{ij}-\eqref{it}, we conclude that for $i,j\in\mathbb{N}$ and $1\le i,j\le n$
\[
D_{x_i}v_j(x,t)=D_{x_j}v_i(x,t),\quad D_{x_i}v_0(x,t)=D_tv_i(x,t).
\]
Thus the integral \eqref{lineintegral} is path-independent according to the Stokes's Theorem (see Theorem 10.33 in \cite{r76}).

\textbf{Step 2: Proof of \pmb{\eqref{heatequation}}.} Due to Lemma 2.3 in \cite{yb25}, we obtain that in $\mathbb{R}^{n+1}_-$
\[
\left(D_t-\Delta_x\right)v_i=D_{x_i}f,\quad \left(D_t-\Delta_x\right)v_0=D_tf.
\]
Thus for $1\le i\le n$
\[
D_{x_i}\left(\left(D_t-\Delta_x\right)u-f\right)=D_t\left(\left(D_t-\Delta_x\right)u-f\right)\equiv0,
\]
which means that
\[
\left(D_t-\Delta_x\right)u-f=\hat{C}\quad \text{in}\quad \mathbb{R}^{n+1}_-,
\]
where $\hat{C}$ is a constant. Notice that by \eqref{v_0v_i} $D_tu=v_0\rightarrow0$ as $\mathcal{R}\rightarrow\infty$. It also follows from \eqref{ij} that
\[
\begin{aligned}
\Delta_xu(x,t)&=\int_{\mathbb{R}^{n+1}_-}\Gamma(y,-s)\Delta_{x}f(x-y,t+s)dyds\\
&=\int_{\mathbb{R}^{n+1}_t}\Gamma(x-y,t-s)\Delta_{x}f(y,s)dyds.
\end{aligned}
\]
\eqref{festimate} gives that $\left|\Delta_xf\right|\le C\mathcal{R}^{-\beta-2}\quad \text{for}\quad \mathcal{R}\ge C_0$. From Lemma 2.4 in \cite{yb25}, $\Delta_xu(x,t)\rightarrow0$ as $\mathcal{R}\rightarrow\infty$. Therefore $\hat{C}=0$ and thus we have proved \eqref{heatequation}.

\textbf{Step 3: Proof of \pmb{\eqref{slowestimate}}.} We first observe the initial conditions and derivatives
\[
\begin{aligned}
u\left(0_n,0\right)=0,\quad D_{x_i}u(x,t)=v_i(x,t), \quad D_tu(x,t)=v_0(x,t).
\end{aligned}
\]
Applying the Fundamental Theorem of Calculus, we express $u$ as
\[
\begin{aligned}
u(x,t)&:=\int_0^1\frac{d}{ds}\left(u\left(sx,s^2t\right)\right)ds\\
&=\int_0^1D_xu\left(sx,s^2t\right)\cdot xds+\int_0^1D_tu\left(sx,s^2t\right)2stds.
\end{aligned}
\]
For $\mathcal{R}\ge C_0$, direct computation yields that 
\[
\begin{aligned}
\left|\int_0^1D_xu\left(sx,s^2t\right)\cdot xds\right|&\le \int_0^{C_0\mathcal{R}^{-1}}|x|ds+\int_{C_0\mathcal{R}^{-1}}^1\left(s\mathcal{R}\right)^{1-\beta}\mathcal{R}ds \\
&\le 
\left\{\begin{array}{ll}
C\mathcal{R}^{2-\beta} & \text { for } \beta\in(1,2), \\
C\ln\mathcal{R} & \text { for } \beta=2
\end{array}\right.
\end{aligned}
\]
and
\[
\begin{aligned}
\left|\int_0^1D_tu\left(sx,s^2t\right)2stds\right|&\le C\mathcal{R}^2\left( \int_0^{C_0\mathcal{R}^{-1}}sds+\int_{C_0\mathcal{R}^{-1}}^1\left(s\mathcal{R}\right)^{-\beta}sds\right) \\
&\le 
\left\{\begin{array}{ll}
C\mathcal{R}^{2-\beta} & \text { for } \beta\in(1,2), \\
C\ln\mathcal{R} & \text { for } \beta=2.
\end{array}\right.
\end{aligned}
\]
\end{proof}

\section{smoothing of solution} \label{smoothing of solution}
Given $C>0$, $m\in\mathbb{N}\setminus\left\{0\right\}$ and $u\in C^{2m,m}\left(\mathbb{R}^{n+1}\setminus\overline{E_C}\right)$, in order to apply Lemma \ref{lemma1}, we need to modify $u$ such that $u\in C^{2m,m}\left(\mathbb{R}^{n+1}_-\right)$. 

\begin{lemma}\label{smooth}
Let $r>0$, $m\in\mathbb{N}\setminus\left\{0\right\}$ and $u\in C^{2,1}\left(\mathbb{R}^{n+1}_-\right)\cap C^{2m,m}\left(\mathbb{R}^{n+1}_-\setminus\overline{E_r}\right)$ be a parabolically convex function satisfying \eqref{ut} and $D^2_xu\ge C_0I$ in $\mathbb{R}^{n+1}_-\setminus\overline{E_r}$ for $C_0>0$. Then there exists $\tilde{u}\in C^{2m,m}\left(\mathbb{R}^{n+1}_-\right)$ satisfying $\tilde{u}\equiv u$ in $\mathbb{R}^{n+1}_-\setminus\overline{E_{2r}}$, $\frac{m_1}{2}\le-\tilde{u}_t\le 2m_2$ and $D_x^2\tilde{u}\ge\delta_0 I$ in $\mathbb{R}^{n+1}_-$ for a constant $\delta_0>0$.
\end{lemma}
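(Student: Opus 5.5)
Glue $u$ to a smooth parabolically convex model using a cutoff in the parabolic variable; the constants are controlled by making the model strongly convex and strongly decreasing. First we produce a smooth function $w$ on a neighbourhood of $\overline{E_{2r}}$ that is close to $u$ in $C^{2,1}$ there. Extend $u$ to a $C^{2,1}$ function on a slightly larger open set containing $\overline{E_{3r}}\cap\{t\le0\}$; for instance, reflect evenly across $t=0$ so that the extension lives on an open set of $\mathbb{R}^{n+1}$. Then mollify with a parabolic mollifier $\rho_\varepsilon$ to get $w_\varepsilon:=u*\rho_\varepsilon$. Because $u\in C^{2,1}$ on the compact set $\overline{E_{3r}}$, we have $D_x^2w_\varepsilon\to D_x^2u$, $D_tw_\varepsilon\to u_t$ and $w_\varepsilon\to u$ uniformly there. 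Parabolic convexity is preserved under mollification with a nonnegative kernel, so $D_x^2w_\varepsilon\ge0$ and $m_1-o(1)\le -D_tw_\varepsilon\le m_2+o(1)$. Also $D_x^2 w_\varepsilon\ge C_0I-o(1)$ on the annulus $E_{3r}\setminus E_{r}$.

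The difficulty is that $D_x^2u$ may degenerate inside $E_r$ (we only know $D_x^2u\ge0$ there), and convex combinations with a cutoff produce error terms. To handle this, add a strictly convex bump: set $w:=w_\varepsilon+\eta\bigl(\tfrac12|x|^2-t\bigr)$ with a small $\eta>0$. Then $D_x^2w\ge\eta I$ on $E_{3r}$, and $-w_t$ stays in $[m_1,m_2+\eta+o(1)]$.

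Next take a smooth cutoff $\varphi=\varphi(\tfrac12|x|^2-t)$ with $\varphi\equiv1$ on $E_{3r/2}$ and $\varphi\equiv0$ outside $E_{2r}$, and define $\tilde u:=\varphi w+(1-\varphi)u$. Then $\tilde u\equiv u$ outside $\overline{E_{2r}}$. On $E_{3r/2}$ we have $\tilde u=w$, which is smooth. On the transition region $E_{2r}\setminus E_{3r/2}$, both $u$ and $w$ are $C^{2m,m}$ (there $u$ is away from $\overline{E_r}$, and mollification preserves higher regularity on interior subsets), so $\tilde u\in C^{2m,m}(\mathbb{R}^{n+1}_-)$. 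On this region we compute
\[
D_x^2\tilde u=\varphi D_x^2w+(1-\varphi)D_x^2u+D_x\varphi\otimes D_x(w-u)+D_x(w-u)\otimes D_x\varphi+(w-u)D_x^2\varphi ,
\]
and similarly
\[
\tilde u_t=\varphi w_t+(1-\varphi)u_t+\varphi_t(w-u).
\]
The convex-combination parts give $D_x^2\tilde u\ge \min(C_0,\eta)I-o(1)$ and $-\tilde u_t\in[m_1-o(1),m_2+\eta+o(1)]$. The error terms are bounded by $C(r)\,\|w-u\|_{C^1_x}$, which is at most $C(r)(o(1)+\eta)$ because the $\eta$-bump has bounded $C^1$ norm on $E_{2r}$.

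The main obstacle is this last estimate: the bump's own contribution $\eta(\tfrac12|x|^2-t)$ to the error terms is of order $\eta C(r)$, and it competes with the gain $\min(C_0,\eta)$. To resolve this, pick the cutoff derivatives against the gain $C_0$ available on the annulus, not against $\eta$. On the transition region $D_x^2u\ge C_0I$ and $D_x^2w\ge (C_0-o(1))I$, so the convex combination gives $C_0$ there. Choosing $\eta$ small relative to $C_0/C(r)$, and then $\varepsilon$ small, makes all error terms less than $C_0/2$ and less than $m_1/2$. This yields $D_x^2\tilde u\ge \delta_0I$ with $\delta_0=\min(\eta,C_0/2)$. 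It also gives $\tfrac{m_1}{2}\le-\tilde u_t\le 2m_2$, using $\eta\le m_2/2$. Outside $E_{2r}$ these bounds hold by the hypotheses.
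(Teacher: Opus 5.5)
Your architecture matches the paper's. You regularize $u$ near $\overline{E_{2r}}$, add a small strictly convex quadratic, and glue back to $u$ with a cutoff in the parabolic radius. You choose the quadratic's coefficient small against $C_0$ and $m_1$, using $D_x^2u\ge C_0I$ on the transition region, and only then send the regularization parameter to zero. Your bookkeeping of the cutoff error terms is fine.

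The step that fails is the extension across $t=0$. The even reflection $u(x,-t)$, $t>0$, is neither $C^{2,1}$ nor nonincreasing in $t$: its time derivative jumps from $u_t(x,0)\le -m_1$ to $-u_t(x,0)\ge m_1$. Hence $D_tw_\varepsilon\to u_t$ is not uniform near $t=0$, and the claim $-D_tw_\varepsilon\ge m_1-o(1)$ is false there. Concretely, with a mollifier even in $t$, the contributions from $t<0$ and $t>0$ cancel, so $D_tw_\varepsilon(x,0)=0$ exactly. Since $(0,0)\in E_{3r/2}$, where $\tilde u=w$, you get $-\tilde u_t(0,0)=\eta$. That is below $m_1/2$ for the small $\eta$ your transition-region estimate requires. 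So the conclusion $\frac{m_1}{2}\le-\tilde u_t$ fails as written. The $x$-Hessian part is unaffected, because even reflection does preserve convexity in $x$.

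The repair is routine. One option is the odd reflection $\bar u(x,t):=2u(x,0)-u(x,-t)$ for $t>0$. It is genuinely $C^{2,1}$ across $t=0$ and has $\bar u_t(x,t)=u_t(x,-t)\in[-m_2,-m_1]$. Its $x$-Hessian $2D_x^2u(x,0)-D_x^2u(x,-t)$ is only $\ge -o(1)$ near $t=0$, so your sentence ``parabolic convexity is preserved under mollification'' needs that slack. The bump absorbs it because $\varepsilon$ is chosen after $\eta$; take $\delta_0=\min(\eta/2,C_0/2)$.

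Cleaner still is a kernel supported in the past, so that $w_\varepsilon(x,t)$ only samples $u$ at times $\le t\le 0$. Then no extension is needed, and $D_x^2w_\varepsilon\ge0$ and $m_1\le-D_tw_\varepsilon\le m_2$ hold exactly. That is effectively what the paper does. It mollifies only in $x$, then applies $m-1$ iterated backward averages $\frac1h\int_{t-h}^t$. Each average gains one $t$-derivative and, by the mean value theorem, keeps $-D_t$ in $[m_1,m_2]$. It then adds $c|x|^2$, which leaves $u_t$ untouched; your $\eta(\frac12|x|^2-t)$ works equally well.

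On the other hand, your cutoff in $\frac12|x|^2-t$ is tidier than the paper's $\tilde\eta(\mathcal{R})$. It lines up exactly with $E_{3r/2}$ and $E_{2r}$, whereas the paper's does not.
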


\begin{proof}
\textbf{Step 1: Definition and smoothness of $\mathbf{\tilde{u}}$.} Take $\delta\in\left(0,\frac{r}{2}\right)$ to be determined and let $\phi=\phi(x)$ be a radial smooth function supported on the ball $B_\delta$, such that $\int_{B_\delta}\phi=1$. We define the spatially mollified function
\[
\hat{u}(x,t)=\int_{B_\delta(x)}u(y,t)\phi(x-y)dy=\int_{B_\delta}u(x-y,t)\phi(y)dy.
\]
For $h>0$ to be determined, we define $\hat{u}_h^{[0]}:=\hat{u}$ and temporal regularization
\[
\hat{u}_h^{[i+1]}(x,t):=-\mkern-19mu\int_{t-h}^t\hat{u}_h^{[i]}(x,s)ds
\]
for $i\in\mathbb{N}$ and $i\ge0$. Next, construct a smooth cutoff function $\tilde{\eta}=\tilde{\eta}(s)$ satisfying
\[
\left\{\begin{array}{ll}
\tilde{\eta}\equiv1 & s\ge 2r, \\
\tilde{\eta}\equiv0 & 0\le s\le \frac{3}{2}r, \\
\left|\tilde{\eta}'\right|\le\frac{4}{r} & \frac{3}{2}r\le s\le 2r.
\end{array}\right.
\]
Define the space-time cutoff function $\eta(x,t):=\tilde{\eta}\left(\mathcal{R}(x,t)\right)$. The final regularized function is then constructed as
\[
\tilde{u}:=\eta u + \left(1-\eta\right)\left(\hat{u}_h^{[m-1]}+c|x|^2\right),
\]
which can be equivalently expressed as
\[
\tilde{u}=u+\left(1-\eta\right)\left(\hat{u}_h^{[m-1]}+c|x|^2-u\right)
\]
for $c>0$ to be determined. By the properties of $\phi$, $\hat{u}$ is smooth in $x$. Combining the fact that $\hat{u}_h^{[i]}$ is $i+1$-th differentiable continous in $t$ yields that $\tilde{u}\in C^{2m,m}\left(\mathbb{R}^{n+1}_-\right)$. 

\textbf{Step 2: Estimates of $\mathbf{\tilde{u}}$.} The convexity of $u$ in $x$ implies that 
\begin{equation}\label{convexity}
D_x^2\hat{u}\ge0 \quad\text{and}\quad D_x^2\hat{u}_h^{[i]}\ge0 \quad  \quad in \quad \mathbb{R}^{n+1}_-.
\end{equation}
Applying the Mean Value Theorem iteratively yields that
\begin{equation}\label{tderivative'}
\begin{aligned}
D_t\hat{u}_h^{[i]}(x,t)&=\frac{\hat{u}_h^{[i-1]}(x,t)-\hat{u}_h^{[i-1]}(x,t-h)}{h}\\
&=D_t\hat{u}_h^{[i-1]}\left(x,t-\theta_1h\right)\\
&\cdots\\
&=D_t\hat{u}\left(x,t-h\sum\limits_{j=1}^i\theta_j\right),
\end{aligned}
\end{equation}
where $\theta_1,\cdots,\theta_i\in(0,1)$. Hence we obtain the uniform bounds
\begin{equation}\label{strictmonotonicity}
m_1\le -D_t\hat{u}_h^{[i]}\le m_2 \quad \text{in}\quad \mathbb{R}^{n+1}_-.
\end{equation}

Here $\tilde{u}=u$ for $\mathcal{R}\ge 2r$ and the desired estimates for $D_x^2\tilde{u}$ and $\tilde{u}_t$ are obvious. For $\mathcal{R}\le \frac{3r}{2}$, there is $\tilde{u}=\hat{u}_h^{[m]}+c|x|^2$, which implies that $D^2_x\tilde{u}\ge 2cI$ from \eqref{convexity} and $m_1\le -D_t\tilde{u}\le m_2$ from \eqref{strictmonotonicity}. 

It remains to prove the estimates for $D_x^2\tilde{u}$ and $\tilde{u}_t$ when $\frac{3r}{2}\le \mathcal{R}\le 2r$. Direct calculations yield
\begin{equation}\label{Dtderivative}
D_t\tilde{u}=D_tu+D_t\left(1-\eta\right)\left(\hat{u}_h^{[m]}-u+c|x|^2\right)+\left(1-\eta\right)D_t\left(\hat{u}_h^{[m]}-u+c|x|^2\right)
\end{equation}
and
\begin{equation}\label{Dx2derivative}
\begin{aligned}
D_x^2\tilde{u}&=D_x^2u+D_x^2\left(1-\eta\right)\left(\hat{u}_h^{[m]}-u+c|x|^2\right)+\left(1-\eta\right)D_x^2\left(\hat{u}_h^{[m]}-u+c|x|^2\right)\\
&\quad+2D_x\left(1-\eta\right)\otimes D_x\left(\hat{u}_h^{[m]}-u+c|x|^2\right),
\end{aligned}
\end{equation}
where for $x,y\in\mathbb{R}^n$, we denote $x\otimes y$ as the matrix $\left(x_iy_j\right)$.

Now we estimate $D_x^iD_t^j\left(\hat{u}_h^{[m]}-u\right)$ for $i,j\in\mathbb{N}$ and $i+2j\le2$. The $C^{2,1}$ regularity of $u$ allows differentiation under the integral sign
\[
D_x^j\hat{u}_h^{[m]}(x,t)=\frac{1}{h^m}\int^t_{t-h}ds_1\int^{s_1}_{s_1-h}ds_2\cdots\int_{s_{m-1}-h}^{s_{m-1}}ds_m\int_{B_\delta}D_x^ju(x-y,s_m)\phi(y) dy,
\]
where $s_m\in\left(t-mh,t\right)$ and $j\in\mathbb{N}$ with $j\le2$. It follows that
\[
D_x^j\left(\hat{u}_h^{[m]}-u\right)(x,t)=\frac{1}{h^m}\int^t_{t-h}ds_1\cdots\int_{B_\delta}\left(D_x^iu(x-y,s_m)-D_x^ju(x,t)\right)\phi(y) dy.
\]
Due to \eqref{tderivative'}, we get that
\[
D_t\left(\hat{u}_h^{[m]}-u\right)(x,t)=\int_{B_\delta}\left(D_tu\left(x-y,t-h\sum\limits_{i=1}^m\theta_i\right)-D_tu(x,t)\right)\phi(y) dy
\]
Because of the uniform continouity of $D_tu$ and $D_x^ju$ in $\overline{E_{\frac{5r}{2}}}\setminus E_{r}$, for every $\epsilon>0$, we can choose $\delta,h$ small enough such that
\[
\left|D_x^ju(x-y,s_m)-D_x^ju(x,t)\right|<\epsilon,\quad \left|D_tu\left(x-y,t-h\sum\limits_{i=1}^m\theta_i\right)-D_tu(x,t)\right|<\epsilon
\]
for every $(x,t)\in E_{2r}\setminus\overline{E_{\frac{3r}{2}}}$ and $y\in B_\delta$. Then
\[
\left|D_x^j\left(\hat{u}_h^{[m]}-u\right)\right|<\epsilon,\quad \left|D_t\left(\hat{u}_h^{[m]}-u\right)\right|<\epsilon.
\]
Considering \eqref{Dtderivative} and \eqref{Dx2derivative}, we can choose $\epsilon,c$ small enough such that 
\[
D^2_x\tilde{u}\ge\frac{C_0}{2}I\quad \text{and} \quad\frac{m_1}{2}\le -D_t\tilde{u}\le 2m_2\quad \text{for} \frac{3r}{2}\le\mathcal{R}\le 2r. 
\]
Take $\delta_0:=\min\left\{2c,\frac{C_0}{2}\right\}$ and the lemma is proved.
\end{proof}

\quad

\section{proof of Theorem \ref{mainthm1}} \label{proof of Theorem 1}

According to Proposition 3.2 in \cite{yb25}, for fixed $\epsilon\in(0,1)$ small enough, there exist a $n\times n$ upper-triangular matrix $T$, $\tau>0$ with $\tau\det T^2=1$ and a constant $C>1$ depending only on $n,m_1,m_2,\lambda,\Lambda,\beta,C_f,\epsilon,\sup\limits_{B_1}u(\cdot,0)$, such that 
\[
C^{-1}I\le T\le CI,\quad m_1\le \tau\le m_2.
\] 
Moreover
\[
w(x,t):=u\left(T^{-1}x,\tau^{-1}t\right),\quad \ \eta(x,t):=w(x,t)-\frac{1}{2}|x|^2+t
\]
satisfy that
\[
\left|\eta(x,t)\right|\le C\mathcal{R}^{2-\epsilon}\quad \text{for}\quad \mathcal{R}\ge C.
\]

Through Proposition 4.1 in \cite{yb25} and Lemma \ref{smooth}, we establish that $u\in C^{2m,m}$ outside a bounded set from the regularity of $f$. By Lemma \ref{smooth}, we can mollify $u$ such that $u\in C^{2m,m}\left(\mathbb{R}^{n+1}_-\right)$ and
\[
D^2_xu\ge \delta_0 I\quad \text{and}\quad \frac{m_1}{2}\le-u_t\le 2m_2\quad \text{in} \quad\mathbb{R}^{n+1}_-
\]
for a constant $\delta_0>0$. Thus $f\in C^{2m-2,m-1}\left(\mathbb{R}^{n+1}_-\right)$ has positive lower and upper bounds in $\mathbb{R}^{n+1}_-$. We also have that $\eta,w\in C^{2m,m}\left(\mathbb{R}^{n+1}_-\right)$ and $\hat{f}\in C^{2m-2,m-1}\left(\mathbb{R}^{n+1}_-\right)$, where $
\hat{f}:=-w_t\det D^2_xw$.

\begin{proof}[Case for $0<\beta\le 1$]
For $\beta\in(0,1)$, following the proofs of Proposition 4.1-4.2 in \cite{yb25}, we establish that, there exists a constant $C_0>0$, such that
\[
\left|D_x^iD_t^j\eta(x,t)\right|\le C_0\mathcal{R}^{2-\beta-(i+2j)}\quad \text{for}\quad \mathcal{R}\ge C_0,
\]
where $i,j\in\mathbb{N}$ and $i+2j\le2m$. The case $\beta\in(0,1)$ is proved.

For $\beta=1$, Building upon Proposition 4.1-4.2 in \cite{yb25} and the regularity of $\eta$, we derive that for every $\epsilon_1\in\left(\frac{1}{2},1\right)$, there exist constants $C_1>0$, such that 
\begin{equation}\label{primaryest}
\left|D_x^iD_t^j\eta(x,t)\right|\le C_1\mathcal{R}^{2-\epsilon_1-(i+2j)}\quad \text{for} \quad \mathcal{R}\ge C_1,
\end{equation}
where $i,j\in\mathbb{N}$ and $i+2j\le2m$. It is easy to check that
\begin{equation}\label{Dxetaeq}
\begin{aligned}
\left( D_t-\Delta_x \right)D_x\eta &= \left( -D_tw\left( D_x^2w \right)^{ij}-\delta_{ij} \right)D_{x_ix_j}(D_x\eta)+\frac{D_twD_x\hat{f}}{\hat{f}} \\
&\quad=: g_1\quad \text{in}\quad \mathbb{R}^{n+1}_-,
\end{aligned}
\end{equation}
with $g_1$ having decay properties
\[
\left|D_x^iD_t^jg_1(x,t)\right| \le C \mathcal{R}^{-2-i-2j}\quad \text{for}\quad \mathcal{R}\ge C_1,
\]
where $i,j\in\mathbb{N}$ and $i+2j\le3$. By Lemma \ref{lemma1}, there exists $\phi_1\in C^{2,1}\left(\mathbb{R}^{n+1}_-\right)$ solving
\begin{equation}\label{phi1eq}
\left(D_t-\Delta_x\right)\phi_1=g_1\quad \text{in} \quad\mathbb{R}^{n+1}_-
\end{equation}
and the estimates
\begin{equation}\label{geC2}
\left|\phi_1(x,t)\right|\le C\ln\mathcal{R}\quad \text{for}\quad \mathcal{R}\ge C_1.
\end{equation}
Thus by \eqref{Dxetaeq} and \eqref{phi1eq}, there is 
\[
\left(D_t-\Delta_x\right)\left(D_x\eta-\phi_1\right)=0_n\quad \text{in}\quad \mathbb{R}^{n+1}_-.
\] 
\eqref{primaryest} and \eqref{geC2} imply that
\begin{equation}\label{eq01}
\left|D_x\eta-\phi_1\right|\le C\mathcal{R}^{1-\epsilon_1}\quad \text{for}\quad \mathcal{R}\ge C_1.
\end{equation}
It is easy to see that
\begin{equation}\label{eq02}
\left|D_x\eta-\phi_1\right|\le C\quad \text{for}\quad \mathcal{R}\le C_1.
\end{equation}
Therefore 
\[
\left|D_x\eta-\phi_1\right|\le C\left(\mathcal{R}+1\right)^{1-\epsilon_1}\quad \text{in}\quad \mathbb{R}^{n+1}_-.
\]
Combined with Theorem 1.2(b) in \cite{lz19}, we obtain that $D_x\eta-\phi_1$ is independent of $t$ variable and harmonic in $\mathbb{R}^n$. It follows that
\[
\left|D_x\eta-\phi_1\right|\le C\left(|x|+1\right)^{1-\epsilon_1}\quad \text{in}\quad \mathbb{R}^n.
\]
By spherical harmonic expansions (see for example Lemma 1 in \cite{lb1} for $n\ge3$ and Lemma 3.1 in \cite{lb2} for $n=2$), we obtain that when $n\ge2$
\[
\left|D_x\eta-\phi_1\right|\le C\quad \text{in}\quad \mathbb{R}^n. 
\]
Notice that for $n=1$, the above estimate is direct. 
By \eqref{geC2} we attain that 
\[
\left|D_x\eta\right|\le C\ln \mathcal{R}\quad \text{for}\quad \mathcal{R}\ge C_1.
\] 

To estimate $D_t\eta$, we notice that
\[
\left( D_t-\Delta_x \right)(D_t\eta) = \left( -D_tw\left( D_x^2w \right)^{ij}-\delta_{ij} \right)D_{x_ix_j}(D_t\eta) + \frac{D_twD_t\hat{f}}{\hat{f}} =: g_2\quad \text{in}\quad \mathbb{R}^{n+1}_-
\]
and 
\[
\left|g_2(x,t)\right|\le C\mathcal{R}^{-3}\quad \text{for} \quad \mathcal{R}\ge C_1. 
\]
We define
\[
\phi_2(x,t):=\int_{\mathbb{R}^{n+1}_t}K(x-y,t-s)g_2(y,s)dyds \quad \text{for} \quad (x,t)\in \mathbb{R}^{n+1}_-.
\]
Using Lemma 2.3-2.4 in \cite{yb25}, we get that 
\[
\left(D_t-\Delta_x\right)\left(D_t\eta-\phi_2\right)=0\quad \text{in}\quad \mathbb{R}^{n+1}_-
\]
and
\[
\left|\phi_2\right|\le C\mathcal{R}^{-1}\quad \text{for}\quad \mathcal{R}\ge C_1.
\]
Combining \eqref{primaryest} for $i=0,j=1$ and Lemma 2.5-2.6 in \cite{yb25}, we conclude that
\[
\left|D_t\eta-\phi_2\right|\le Ce^{\frac{-\sqrt{2}\mathcal{R}}{8}}\quad \text{for}\quad \mathcal{R}\ge C_1
\]
and then
\[
\left|D_t\eta\right|\le C\mathcal{R}^{-1}\quad \mathcal{R}\ge C_1.
\]

Now we estimate $\eta$. For $\mathcal{R}\ge C_2$, there is
\[
\begin{aligned}
\eta(x,t)-\eta\left(0_n,0\right)&=\int_0^1\frac{d}{ds}\left( \eta\left(sx,s^2t\right) \right)ds\\&=\int_0^1D_x\eta\left(sx,s^2t\right)\cdot xds+2t\int_0^1D_t\eta\left(sx,s^2t\right)sds,
\end{aligned}
\]
where
\[
\begin{aligned}
&\left|\int_0^1D_x\eta\left(sx,s^2t\right)\cdot xds\right|\\
&=\int_0^{C^2\mathcal{R}^{-1}}\left|D_x\eta\left(sx,s^2t\right)\right||x|ds+\int_{C^2\mathcal{R}^{-1}}^1\left|D_x\eta\left(sx,s^2t\right)\right||x|ds\\
&\le C\left( \int_0^{C_2\mathcal{R}^{-1}}C\mathcal{R}ds + \int_{C^2\mathcal{R}^{-1}}^1\ln\left(s\mathcal{R}\right)\mathcal{R}ds \right)\le C\mathcal{R}\ln\mathcal{R}
\end{aligned}
\]
and
\[
\begin{aligned}
2|t|\left|\int_0^1D_t\eta\left(sx,s^2t\right)sds\right|\le C\mathcal{R}^{2}\left( \int_0^{C_2\mathcal{R}^{-1}}sds + \int_{C_2\mathcal{R}^{-1}}^1\left(s\mathcal{R}\right)^{-1}sds \right)\le C\mathcal{R}.
\end{aligned}
\]
As a result 
\[
\left|\eta(x,t)\right|\le C\mathcal{R}\ln\mathcal{R}\quad \text{for} \quad\mathcal{R}\ge C_1. 
\]
Following the proof of Proposition 4.1 in \cite{yb25},  we obtain that
\[
\left|D_x^iD_t^j\eta(x,t)\right|\le C\left(\ln\mathcal{R}\right)\mathcal{R}^{1-(i+2j)}\quad \text{for}\quad \mathcal{R}\ge 2C_1,
\]
where $i,j\in\mathbb{N}$ and $i+2j\le2m$. This completes the proof for $\beta=1$.
\end{proof}

\begin{proof}[Case for $1<\beta\le 2$]
For $\beta\in(1,2)$, following the proof of Proposition 4.4 in \cite{yb25}, we derive that there exist $\hat{b}\in\mathbb{R}^n$ and $C_2>0$, such that
\[
\left|D_x^iD_t^j\hat{\eta}\right|\le C_2\mathcal{R}^{2-\beta-(i+2j)}\quad \text{for}\quad \mathcal{R}\ge C_2,
\]
where 
\[
\hat{\eta}(x,t):=\eta(x,t)-\hat{b}\cdot x,
\] 
$i,j\in\mathbb{N}$ and $i+2j\le 2m$. The case $\beta\in(1,2)$ has been proved.

For $\beta=2$, performing the routine in the proof of Proposition 4.4 in \cite{yb25} and fixing $\epsilon_1\in\left(\frac{1}{2},1\right)$, we attain that there exist constants $C_3>0$, such that
\begin{equation}\label{hat}
\left|D_x^iD_t^j\hat{\eta}\right|\le C_3\mathcal{R}^{2-2\epsilon_1-(i+2j)}\quad \text{for}\quad \mathcal{R}\ge C_3,
\end{equation}
where $i,j\in\mathbb{N}$ and $i+2j\le2m$. It follows from the equation of $w$ that
\[
\left( D_t-\Delta_x \right)\hat{\eta} = \left( -\frac{\tilde{a}_{ij}}{\tilde{a}_1} - \delta_{ij} \right)D_{x_ix_j}\hat{\eta} + \frac{\log\hat{f}}{\tilde{a}_1}=:g_3\quad \text{in}\quad \mathbb{R}^{n+1}_-,
\]
where
\begin{equation}\label{coefficient}
\tilde{a}_1(x,t):=\int_0^1\left(  sD_t\hat{\eta}-1 \right)^{-1}ds,\quad
\tilde{a}_{ij}(x,t):=\int_0^1\left( sD_x^2 \hat{\eta}+I\right)^{ij}ds.
\end{equation}
It is easy to check that 
\[
\left|D_x^iD_t^jg_3(x,t)\right| \le C \mathcal{R}^{-2-i-2j}\quad \text{for}\quad \mathcal{R}\ge C_3,
\]
where $i,j\in\mathbb{N}$ and $i+2j\le3$. Using Lemma \ref{lemma1}, there exists $\phi_3\in C^{2,1}\left(\mathbb{R}^{n+1}_-\right)$ satisfying the equation
\[
\left(D_t-\Delta_x\right)\left(\hat{\eta}-\phi_3\right)=0\quad \text{in}\quad \mathbb{R}^{n+1}_-
\]
and the estimates
\begin{equation}\label{phi3}
\left|\phi_3\right|\le C\ln\mathcal{R}\quad \text{for}\quad \mathcal{R}\ge C_3,
\end{equation}
It follows from \eqref{hat} and \eqref{phi3} that
\[
\left|\hat{\eta}-\phi_3\right|\le C\mathcal{R}^{2-2\epsilon_1}\quad \text{for} \quad \mathcal{R}\ge C_3.
\]
Thus we observe that
\[
\left|\hat{\eta}-\phi_3\right|\le C\left(\mathcal{R}+1\right)^{2-2\epsilon_1}\quad \text{in} \quad \mathbb{R}^{n+1}_-.
\]
Theorem 1.2(b) in \cite{lz19} gives that $\hat{\eta}-\phi_3$ is a function independent of $t$ and is harmonic in $\mathbb{R}^n$. It follows that
\[
\left|\hat{\eta}-\phi_3\right|\le C\left(|x|+1\right)^{2-2\epsilon_1}\quad \text{in}\quad \mathbb{R}^n.
\]
By spherical harmonic expansions, we obtain that 
\[
\left|\hat{\eta}-\phi_3\right|\le C\quad \text{in} \quad\mathbb{R}^n. 
\]
Then we attain that 
\[
\left|\hat{\eta}\right|\le C\ln \mathcal{R}\quad \text{for} \quad\mathcal{R}\ge C_3. 
\]
Following the proof of Proposition 4.1 in \cite{yb25} gives that
\[
\left|D_x^iD_t^j\hat{\eta}(x,t)\right|\le C\left(\ln\mathcal{R}\right)\mathcal{R}^{2-(i+2j)}\quad\text{for}\quad \mathcal{R}\ge 2C_3,
\]
where $i,j\in\mathbb{N}$ and $i+2j\le2m$. This completes the proof for $\beta=2$.
\end{proof}

Denote $b:=T'\hat{b}$, $A:=T'T$. Theorem \ref{mainthm1} is proved after scaling back.

\noindent {\bf Funding:} J. Bao is supported by the National Natural Science Foundation of China (12371200).

\medskip

\bibliographystyle{plain}
\def\cprime{$'$}

\end{document}